\documentclass{scrartcl}

\usepackage[
theoremdefs,
final,
]{allergy}
\usepackage{unixode}

\usepackage[font=small]{caption}

\usepackage{helvet}

\usepackage{booktabs}
\usepackage{subfig}

\usetikzlibrary{calc}
\usetikzlibrary{spy}
\usepackage{pgfplots}

\newcommand{\dada}[1]{#1}
\newcommand{\olol}[1]{#1}

\newcommand{\R}{\mathbb R}
\newcommand{\Z}{\mathbb Z}

\newcommand{\e}{\ensuremath{\mathrm{e}}}

\newcommand{\ii}{\ensuremath{\mathrm{i}}}

\newcommand*\dd{\mathrm{d}}
\newcommand{\GG}{\nabla g}

\newcommand*\CC{\mathbb{C}}
\newcommand*\CP{\CC P}
\newcommand*\uu[1]{S(#1)}
\newcommand*\Tr{\mathrm{Tr}}

\usepackage{relsize}
\usepackage{xspace}
\newcommand*\Shake{{\smaller SHAKE}\xspace}
\newcommand*\Rattle{{\smaller RATTLE}\xspace}

\title{Multi-symplectic discretisation of wave map equations}
\usepackage{authblk}

\newcommand*\email[1]{\href{mailto:#1}{\nolinkurl{#1}}}
\author[1,2]{David Cohen\thanks{\email{david.cohen@umu.se}}}
\author[1,3]{Olivier Verdier\thanks{\email{olivier.verdier@hib.no}}}
\affil[1]{Department of Mathematics and Mathematical Statistics, Umeå,  Sweden}
\affil[2]{Department of Mathematics, University of Innsbruck, Austria}
\affil[3]{Department of Computing, Mathematics and Physics, Bergen University College, Norway}
              
\begin{document}
\maketitle
\begin{abstract}
We present a new multi-symplectic formulation of constrained Hamiltonian partial differential equations, and we study the associated local conservation laws. 
A multi-symplectic discretisation based on this new formulation is exemplified by means of the Euler box scheme. 
When applied to the wave map equation, this numerical scheme is explicit, preserves the constraint and can 
be seen as a generalisation of the \Shake algorithm for constrained mechanical systems.  
Furthermore, numerical experiments show excellent conservation properties of the numerical solutions.
\end{abstract}

\begin{description}
   \item[Keywords]
	Constrained Hamiltonian partial differential equations
		· 
Wave map equations
		· 
 Multi-symplectic partial differential equation 
		· 
 Numerical discretisation
		· 
 Multi-symplectic schemes 
		·
                 Euler box scheme
\item[Mathematics Subject Classification (2010)] 
35Q51 · 35Q53 · 37K05 · 37K10 · 37M15 · 65M06 · 65M99 · 65P10
\end{description}


\section{Introduction}\label{sect-intro}
Ever since the seminal papers~\cite{mps98} and~\cite{br01} on multi-symplectic Hamiltonian PDEs and their discretisation, there has been a growing interest in multi-symplectic integrators.
The purpose of this paper is to propose a novel multi-symplectic integrator for multi-symplectic partial differential equations (PDEs) with constraints.
For an overview of multi-symplectic PDEs, we refer to~\cite{brid97,br01,lr04}, and references therein.

We illustrate our findings with a particular multi-symplectic PDE with constraints, the wave map equation on the sphere (or on the circle):
\begin{gather}
\label{eq:wavemap}
\begin{aligned}
u_{tt}(x,t)-\Delta u(x,t)&=\lambda u(x,t)\quad\text{in}\quad\Omega\times(0,\infty)\\
\norm{u(x,t)}^2&=1\\
u(x,0)&=u_0(x),\quad u_t(x,0)=v_0(x),
\end{aligned}
\end{gather}
where the \emph{domain} $\Omega\subset\R^m$ is an $m$-dimensional box or torus, $u\in\mathbb{S}^{\ell}$ so the \emph{target manifold} $\mathbb{S}^{\ell}$ is the $\ell$-dimensional sphere, and $\lambda$ is a Lagrange multiplier. 
Here, the initial position $u_0$ and velocity $v_0$ (living in the tangent space of the target manifold) 
of the wave are given. 
\olol{Note that we will allow the ``sphere'' to be a hyperbolic sphere, as we allow the norm on $\R^{\ell+1}$ to be degenerate.}

In the particular situation of the standard wave map equation \eqref{eq:wavemap}, 
our general multi-symplectic method takes the simple form 
\begin{gather}
   \label{eq:wavemapscheme}
   \begin{aligned}
   \frac{u^{n,i+1}-2u^{n,i}+u^{n,i-1}}{\Delta t^2}-\frac{u^{n+1,i}-2u^{n,i}+u^{n-1,i}}{\Delta x^2}&=-\olol{\lambda^{n,i+1}} u^{n,i}\\
   \norm{u^{n,i+1}}^2&=1,
   \end{aligned}
\end{gather}
where $u^{n,i}\approx u(x_n,t_i)$ on a uniform rectangular grid 
with meshes $\Delta x$ and $\Delta t$. 
This method can thus be regarded as a particular case of the \Shake algorithm for constrained mechanical systems  
\cite[Sect. VII.1.4]{HLW02}~\cite{MLMoVeWi14}, to Hamiltonian PDEs with constraints. 

The advantages of the proposed multi-symplectic method, which for the standard wave map equation reduces to \eqref{eq:wavemapscheme}, are summarized as follows:
\begin{itemize}
   \item The implementation of the numerical method is effortless;
   \item It has no energy drift;
   \item It is explicit for the wave map equation on the sphere;
   \item We can simulate wave map equations with an additional smooth potential;
   \item \olol{We can handle arbitrary target Riemannian submanifolds of $\R^n$, where $\R^n$ is equipped with a nondegenerate bilinear form.}
\end{itemize}

The wave map equation \eqref{eq:wavemap}  has received considerable attention, from a more theoretical point of view,  during the last decades. 
It has applications in general relativity and in particle physics, see~\cite{ss98,r04,t04,b09} and references therein for details.
Furthermore, this equation is integrable for a domain of dimension one ($m=1$)~\cite{bfp07}, or when the target manifold is a circle;
has a conserved energy;
is time reversible; 
is non dissipative;
is invariant with respect to the scaling $u(x,t)\to u(\gamma x,\gamma t)$ for $\gamma\in\R$;
is related to Einstein equations~\cite{bcm95}; 
is related to the sine-Gordon equation~\cite{p76,ss98}; 
has critical regularity $m/2$; 
possesses, for example, global smooth solutions if the initial data are smooth and if the domain is $\R^1$~\cite{t00,t01,ss02,sz03,kt98,t05};
has blow-up solutions in finite time if the domain is $\R^m$ with $m\geq3$~\cite{k08,kst08};
it is however an open problem to show if smooth solutions do become singular in finite time;
etc. 
See also~\cite{s97,t04} and references therein.

We now review the literature on the numerical analysis of the wave map equation.
The earlier papers~\cite{bct01,il02} 
report numerical evidence of finite-time blow-up of smooth initial data. 
These works restrict to equivariant maps, where the wave map equation reduces 
to a semilinear scalar wave equation. This scalar problem is then discretised 
with the standard leapfrog scheme or the Crank--Nicholson scheme with adaptive mesh.  
Very recently, there has been a renewed interest for the numerical discretisation 
of wave map problems starting with the series of papers~\cite{bfp07,b09,blp09,bps10}. 
These works prove convergence of certain (semi)-implicit finite element based methods 
to weak solutions of wave map equations. The main aim of~\cite{pf12} 
is to compare the evolution of (the blow-up of) equivariant maps using 
the classical Runge--Kutta $4$ scheme and the \Rattle algorithm for the 
time discretisation of wave map equations. 
Here, the authors used the method of lines to discretise the PDE 
and a five-point formulae for the spatial derivatives. Our multi-symplectic numerical 
method shares similarities with the \Rattle algorithm (as we shall see in \autoref{sect-app} 
that it is the \Shake algorithm in time, and \Rattle is almost identical to \Shake~\cite[\S\,5.1.2]{MLMoVeWi14}) but we would 
like to point out that our formalism is more general than the one proposed in~\cite{pf12}.    
In addition, the authors of the previously cited paper analyse, for the first time, 
the blow-up dynamics and singularity formation in the nonequivariant 
case using the same numerical methods in reference~\cite{fp13}. Furthermore, 
the recent publication~\cite{kw13} presents a 
finite difference method applied to a reformulation of the wave map equation. 
The proposed method conserves the energy, the constraint and 
converges to the weak solution of the wave map equation. 
This numerical method is however implicit. 

The paper is organised as follows. 
\autoref{sect-msPDE} presents new multi-symplectic formulation 
and discretisation of general Hamiltonian PDEs with constraint. 
This is then illustrate for the particular case of wave map equations in \autoref{sect-app} and \autoref{sect-numexp}. 
The paper ends with concluding remarks in \autoref{sect-conc} and 
\olol{with explanations on how to simulate wave map equations where the target manifold is the 
complex projective space in \autoref{apCPS}.} 


\section{Multi-symplectic Hamiltonian PDEs with constraint}\label{sect-msPDE}
We begin by extending the concept of multi-symplectic PDEs to multi-symplectic PDEs with constraint. 
We will then use this new multi-symplectic formulation to derive a multi-symplectic numerical scheme for the above type of problems.

\subsection{Multi-symplectic formulation of the equations}\label{sect-mss}
There are two standard ways to construct multi-symplectic formulations of 
a PDE. One approach is using the Lagrangian formulation of the problem, 
see the early papers~\cite{g91,mps98} and references therein. The other approach is to write 
the partial differential equation as a system of equations containing only first-order derivatives 
in space and time, see equation \eqref{prob} below, and then to extract the multi-symplectic structure, 
see the early papers~\cite{brid97,br01,lr04} and references therein. 

We will now generalise this second approach to PDEs with constraints. 
In order to do this, let $n$ be an integer, two skew-symmetric 
matrices $M,K\in\R^{n\times n}$ and a scalar 
function $S\colon\R^n\to\R$. We consider \emph{Hamiltonian systems on a multi-symplectic 
structure with constraint}
\begin{gather}
   \label{prob}
   \begin{aligned}
   Mz_t+Kz_x&=\nabla_zS(z)-\lambda \GG(z)\\
   g(z)&=0.
   \end{aligned}
\end{gather}
Here, $z=z(x,t)\in\R^n$ is the state variable with components $z=(z_1,\ldots,z_n)$.
$\lambda$ is a Lagrange multiplier, $x\in[0,1]$ (for simplicity, see the remark below) and $t>0$. 
The motion is thus constrained to satisfy $g(z)=0$, where $g\colon\R^{\ell}\to\R$ and $\GG(z)$ denotes the gradient of $g$. 
Note that it is straightforward to generalize to the case of more than one constraint.

\olol{Remark that, one could add the Lagrange multiplier $\lambda$ as a variable in $z$ and add a zero 
row and column to $M$ and $K$. this would give the standard multi-symplectic formulation. 
However, in general, a scheme derived with this direct reformulation of the equation 
will not be stable.
This problem is well known in differential-algebraic equations: in general one has to enforce the constraint at the end step.}  

Observe that most multi-symplectic PDEs have removable constraints defining the auxiliary variables.
However, in our paper, the constraint is imposed externally.

\begin{remark}
One can further treat the case $x=(x_1,x_2)\in[0,1]^2$ (or any higher dimension) considering the multi-symplectic formulation 
\begin{align*}
Mz_t+K_1z_{x_1}+K_2z_{x_2}&=\nabla_zS(z)-\lambda \GG(z)\\
g(z)&=0
\end{align*}
with three skew-symmetric matrices $M,K_1$ and $K_2$.
See for example \autoref{sec:convergence}.
\end{remark}

\subsection{Conservation laws}\label{sect-cl}

From the formulation \eqref{prob}, we shall now introduce the conservation 
laws  of multi-symplecticity, energy and momentum. 
These derivations are similar to~\cite[Chap. 12]{lr04} with the added difficulty of 
the fulfillment of the constraint. 
\begin{proposition}\label{prop-clms}
The differential forms 
$$
\omega:=\frac12{\dd}z\wedge M{\dd}z\qquad\text{and}
\qquad\kappa:=\frac12{\dd}z\wedge K{\dd}z
$$
satisfy the following \emph{conservation law of multi-symplecticity}
\begin{align}\label{CLms}
\omega_t+\kappa_x=0 
\end{align}
along the solutions of the multi-symplectic PDE \eqref{prob}.
\end{proposition}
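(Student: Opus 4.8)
The plan is to adapt the classical derivation of the conservation of symplecticity for (unconstrained) multi-symplectic PDEs, cf.~\cite[Chap.~12]{lr04}, by working with the variational equation of~\eqref{prob} and keeping careful track of the single new term produced by the Lagrange multiplier.

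Concretely, I would first linearise the first line of~\eqref{prob} along a given solution, treating \emph{both} $z$ and $\lambda$ as solution-dependent. Denoting by $S_{zz}(z)$ and $g_{zz}(z)$ the Hessians of $S$ and $g$, this gives the variational equation
$$
M\,\dd z_t + K\,\dd z_x \;=\; S_{zz}(z)\,\dd z \;-\; \lambda\, g_{zz}(z)\,\dd z \;-\; \dd\lambda\,\GG(z).
$$
At the same time, linearising the constraint $g(z)=0$ yields $\GG(z)\cdot\dd z = 0$, i.e.\ the admissible variations remain tangent to the constraint manifold; this is the only extra ingredient beyond the standard argument.

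Next, using the skew-symmetry of $M$ and $K$ together with the anticommutativity of the wedge product of one-forms, I would rewrite $\omega_t = \dd z_t\wedge M\,\dd z$ and $\kappa_x = \dd z_x\wedge K\,\dd z$, so that
$$
\omega_t + \kappa_x \;=\; \dd z_t\wedge M\,\dd z + \dd z_x\wedge K\,\dd z \;=\; -\bigl(M\,\dd z_t + K\,\dd z_x\bigr)\wedge\dd z.
$$
Plugging in the variational equation, the contributions $S_{zz}(z)\,\dd z\wedge\dd z$ and $\lambda\, g_{zz}(z)\,\dd z\wedge\dd z$ vanish because the Hessians are symmetric whereas $\dd z\wedge\dd z$ is antisymmetric, and the surviving term is $\bigl(\dd\lambda\,\GG(z)\bigr)\wedge\dd z = \dd\lambda\wedge\bigl(\GG(z)\cdot\dd z\bigr) = 0$ by the linearised constraint. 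Hence $\omega_t+\kappa_x = 0$, as claimed.

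The genuinely delicate point — and the step I would write out most carefully — is the handling of the multiplier term: one must remember that $\lambda$ is not a fixed parameter but varies along the family of solutions, so that its differential $\dd\lambda$ appears in the variational equation, and the cancellation rests precisely on the fact that variations satisfy $\GG(z)\cdot\dd z = 0$. Everything else is routine bookkeeping with wedge products of vector-valued one-forms and the (skew-)symmetry of the matrices $M$, $K$, $S_{zz}$ and $g_{zz}$.
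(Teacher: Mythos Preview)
Your argument is correct and follows essentially the same route as the paper's proof: linearise \eqref{prob} (including the constraint), wedge with $\dd z$, and use the symmetry of $S_{zz}$ and $g_{zz}$ together with $\GG(z)\cdot\dd z=0$ to kill the right-hand side. The only cosmetic difference is that you expand $\dd\bigl(\lambda\,\GG(z)\bigr)$ into its two pieces before wedging, whereas the paper does so after; the content is identical.
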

\begin{proof}
Let us consider the variational equation of \eqref{prob}
\begin{align*}
M{\dd}z_t+K{\dd}z_x&=S_{zz}(z){\dd}z-\dd(\lambda \GG(z))\\
\GG(z)\dd z &=0.
\end{align*}
Taking the wedge product of the above expression with ${\dd}z$, one then obtains
\begin{align*}
\dd z\wedge M{\dd}z_t+\dd z\wedge K{\dd}z_x&=\dd z\wedge S_{zz}(z){\dd}z
-\dd z\wedge(\dd\lambda \GG(z))-\dd z\wedge(g_{zz}(z)\dd z)\lambda.
\end{align*}
Using the symmetry of $S_{zz}(z)$, the symmetry of the Hessian matrix $g_{zz}(z)$, 
and using the constraint, we see that the right-hand side is equal to zero. 
Finally, applying properties of the wedge product, we observe that 
$$
\omega_t=\frac12\dd z_t\wedge M\dd z+\frac12\dd z\wedge M\dd z_t
=-\frac12M\dd z_t\wedge \dd z+\frac12\dd z\wedge M\dd z_t
=\dd z\wedge M\dd z_t, 
$$
and similarly for the term $\kappa_x$. 
This gives the above conservation law of multi-symplecticity.
\end{proof}
Observe that the conservation of symplecticity in \autoref{prop-clms} amounts to a conservation of presymplecticity on the constraint submanifold.
\olol{Note, however, that there is no consensus as to what presymplecticity and symplecticity are in the multi-symplectic case.
      In fact, most definitions of multi-symplecticity would correspond to presymplecticity in one independent variable.
      Studying in which way our methods are in fact multi-symplectic (in a stronger sense than presymplecticity) is outside the scope of this paper.}

As noted in~\cite{mr03}, the conservation law of multi-symplecticity \eqref{CLms} 
can be simplified by taking a non-unique splitting of the matrices 
$M$ and $K$ (see also \autoref{sect-msDISC} below) such that 
$$
M=M_++M_-,\qquad K=K_++K_-,
$$
where 
$$
M_+^T=-M_-\qquad\text{and}\qquad K_+^T=-K_-.
$$
Hence \eqref{CLms} holds with 
$$
\omega={\dd}z\wedge M_+{\dd}z\qquad\text{and}\qquad\kappa={\dd}z\wedge K_+{\dd}z.
$$
One next obtains the conservation law of energy by taking the usual scalar product (denoted by $\langle \cdot,\cdot\rangle$) 
of \eqref{prob} with $z_t$. 
Noting that $\langle z_t,Mz_t\rangle=0$, one gets
\begin{align*}
\langle z_t,K_+z_x+K_-z_x\rangle=\langle z_t,\nabla_zS(z)\rangle-\langle z_t,\lambda \GG(z)\rangle.
\end{align*}
Since $\langle z_t,K_+z_x+K_-z_x\rangle=\partial_x\bigl(\langle z_t,K_+z\rangle\bigr)
-\partial_t\bigl(\langle z_x,K_+z\rangle\bigr)$ and $\langle z_t,\nabla_zS(z)\rangle=\partial_t\bigl(\nabla_zS(z)\bigr)$, 
one obtains the \emph{conservation law of energy}
\begin{align}\label{CLenerg}
E_t(z)+F_x(z)=0
\end{align}
with the density functions 
\begin{align*}
E(z)&=S(z)+\langle z_x,K_+z\rangle\\
F(z)&=-\langle z_t,K_+z\rangle.
\end{align*}
Similarly, the \emph{conservation law of momentum} reads
\begin{align}\label{CLmom}
I_t(z)+J_x(z)=0
\end{align}
with the density functions 
\begin{align*}
I(z)&=-\langle z_x,M_+z\rangle\\
J(z)&=S(z)+\langle z_t,M_+z\rangle.
\end{align*}

\subsection{Multi-symplectic discretisation of Hamiltonian PDEs with constraint}\label{sect-msDISC}
The goal of this subsection is now to construct a numerical method for \eqref{prob} 
which preserves a discrete analog of the conservation law 
of multi-symplecticity \eqref{CLms}. 

In order to do this, we first extend the Euler box scheme, see for example~\cite{mr03}, 
to constrained Hamiltonian PDE \eqref{prob}. We set 
$\Delta x=x_{n+1}-x_n, n\in\mathbb{N}$, and
$\Delta t=t_{i+1}-t_i$, for a nonnegative integer $i$.
Moreover, we define the forward and backward differences in time 
\begin{equation*}
\delta_t^+z^{n,i}=\frac{z^{n,i+1}-z^{n,i}}{\Delta t}
\qquad\hbox{and}\qquad 
\delta_t^-z^{n,i}=\frac{z^{n,i}-z^{n,i-1}}{\Delta t},
\end{equation*}
and similarly for differences in space. 

Further, we introduce a splitting of the two matrices $M$ and $K$ 
in \eqref{prob}, setting $M=M_++M_-$, 
$K=K_++K_-$ where $M_+^T=-M_-$ 
and $K_+^T=-K_-$. 
In this article, we only consider this particular splitting, keeping in mind that it is not the only possible splitting. 
We now apply the symplectic Euler method to the temporal and spatial discretisation of \eqref{prob}. 
This yields the \emph{Euler box scheme for constrained Hamiltonian PDE} \eqref{prob}
\begin{gather}
   \label{ms-euler}
   \begin{align}
   M_+\delta_t^+z^{n,i}+M_-\delta_t^-z^{n,i}+K_+\delta_x^+z^{n,i}+
   K_-\delta_x^-z^{n,i}&=\nabla_zS(z^{n,i})-\olol{\lambda^{n,i+1}}\GG(z^{n,i})\\
   g(z^{n,i+1})&=0,
   \end{align}
\end{gather}
where $z^{n,i}\approx z(x_n,t_i)$ on a uniform rectangular grid. 

To conclude this subsection, we show that the Euler box scheme \eqref{ms-euler} is a multi-symplectic integrator.
\begin{proposition}
We consider the Euler box scheme \eqref{ms-euler} with $M_+^T=-M_-$ and $K_+^T=-K_-$. 
The Euler box scheme \eqref{ms-euler} for constrained Hamiltonian PDE \eqref{prob} 
satisfies the following discrete multi-symplectic conservation law
\begin{align}\label{discretCL}
\delta_t^+({\dd}z^{n,i-1}\wedge M_+{\dd}z^{n,i})+\delta_x^+({\dd}z^{n-1,i}\wedge K_+{\dd}z^{n,i})=0.
\end{align}
In analogy to the original definition of multi-symplectic integrators from~\cite{br01}, we thus call 
this numerical method a \emph{multi-symplectic integrator for \eqref{prob}}.
\end{proposition}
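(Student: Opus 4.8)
The plan is to mimic, at the discrete level, the continuous argument used in the proof of \autoref{prop-clms}. First I would write down the discrete variational equation obtained by applying the exterior derivative $\dd$ (with respect to the solution, treating $z^{n,i}$ and $\lambda^{n,i+1}$ as the differentiated quantities) to the Euler box scheme \eqref{ms-euler}:
\begin{align*}
M_+\delta_t^+\dd z^{n,i}+M_-\delta_t^-\dd z^{n,i}+K_+\delta_x^+\dd z^{n,i}+K_-\delta_x^-\dd z^{n,i}&=S_{zz}(z^{n,i})\dd z^{n,i}-\dd\bigl(\lambda^{n,i+1}\GG(z^{n,i})\bigr)\\
\GG(z^{n,i+1})\,\dd z^{n,i+1}&=0.
\end{align*}
Next I would take the wedge product of the first of these equations with $\dd z^{n,i}$. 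As in the continuous case, the $S_{zz}$ term vanishes by symmetry of the Hessian, and the Lagrange-multiplier contribution splits into $\dd z^{n,i}\wedge(\dd\lambda^{n,i+1}\,\GG(z^{n,i}))+\lambda^{n,i+1}\,\dd z^{n,i}\wedge g_{zz}(z^{n,i})\dd z^{n,i}$; the second piece is zero by symmetry of $g_{zz}$, and the first piece is zero because $\GG(z^{n,i})\,\dd z^{n,i}=0$ — but here is the subtlety: the discrete constraint is enforced at $z^{n,i+1}$, i.e. $\GG(z^{n,i+1})\,\dd z^{n,i+1}=0$, not at $z^{n,i}$. I would address this by noting that the scheme holds at every grid point, so in particular $\GG(z^{n,i})\,\dd z^{n,i}=0$ holds as well (shifting the index $i\mapsto i-1$), and hence the right-hand side is genuinely zero. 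So the expected main obstacle is bookkeeping the index shifts on the constraint and the multiplier correctly, rather than any conceptual difficulty.

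The remaining work is the discrete analog of the identity $\omega_t=\dd z\wedge M\dd z_t$. Here I would use the splitting $M=M_++M_-$ with $M_+^T=-M_-$ to rewrite the two $M$-terms. The key algebraic fact is that for any vectors (or one-forms) $a,b$ one has $a\wedge M_+ b = -M_+^T a\wedge b = M_- a\wedge b$, i.e. $a\wedge M_+b = b\wedge M_- a$ up to the sign coming from swapping the wedge factors; combined with a discrete product rule for $\delta_t^+$ this should telescope $M_+\delta_t^+\dd z^{n,i}\wedge\dd z^{n,i}+M_-\delta_t^-\dd z^{n,i}\wedge\dd z^{n,i}$ (or rather the wedge of $\dd z^{n,i}$ with these) into the exact difference $\delta_t^+(\dd z^{n,i-1}\wedge M_+\dd z^{n,i})$. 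Concretely, $\delta_t^+(\dd z^{n,i-1}\wedge M_+\dd z^{n,i}) = \frac{1}{\Delta t}(\dd z^{n,i}\wedge M_+\dd z^{n,i+1}-\dd z^{n,i-1}\wedge M_+\dd z^{n,i})$, and one checks this equals $\dd z^{n,i}\wedge M_+\delta_t^+\dd z^{n,i}+\dd z^{n,i}\wedge M_-\delta_t^-\dd z^{n,i}$ after using $M_+^T=-M_-$ to move $M_+$ across the wedge in the second term and relabelling. The same manipulation applied to the $K$-terms with $K_+^T=-K_-$ produces $\delta_x^+(\dd z^{n-1,i}\wedge K_+\dd z^{n,i})$.

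Putting the pieces together: wedging the discrete variational equation with $\dd z^{n,i}$ gives zero on the right, and the left-hand side, after the telescoping identities above, is exactly $\delta_t^+(\dd z^{n,i-1}\wedge M_+\dd z^{n,i})+\delta_x^+(\dd z^{n-1,i}\wedge K_+\dd z^{n,i})$, which yields \eqref{discretCL}. I expect the write-up to be short; the only things to be careful about are (i) the sign conventions for wedging a matrix-valued expression and for swapping wedge factors of one-forms, and (ii) making sure the constraint identity is invoked at the correct grid point, as flagged above. Everything else is routine discrete calculus parallel to the continuous proof already given for \autoref{prop-clms}.
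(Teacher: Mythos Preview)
Your proposal is correct and follows essentially the same approach as the paper: write the discrete variational equation, wedge with $\dd z^{n,i}$, kill the right-hand side via symmetry of $S_{zz}$ and $g_{zz}$ plus the constraint, and then use $M_+^T=-M_-$, $K_+^T=-K_-$ to telescope the left-hand side into the stated forward differences. In fact you are more careful than the paper on the index shift for the constraint (the paper simply writes $\GG(z^{n,i})\dd z^{n,i}=0$ without comment), and you spell out the discrete product-rule identity that the paper leaves as ``properties of the wedge product''.
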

\begin{proof}
The proof follows the lines of the proof of \autoref{prop-clms}, 
see also the one of~\cite[Prop.~1]{mr03} in the absence of constraints. 
We start the proof by considering the discrete variational equation 
\begin{align*}
M_+\delta_t^+{\dd}z^{n,i}+M_-\delta_t^-{\dd}z^{n,i}+K_+\delta_x^+{\dd}z^{n,i}+
K_-\delta_x^-{\dd}z^{n,i}&=S_{zz}(z^{n,i}){\dd}z^{n,i}-{\dd}(\olol{\lambda^{n,i+1}} \GG(z^{n,i}))\\
\GG(z^{n,i}){\dd}z^{n,i}&=0.
\end{align*}
Taking the wedge product of the above expression with ${\dd}z^{n,i}$, we obtain
\begin{align*}
&{\dd}z^{n,i}\wedge\Bigl(M_+\delta_t^+{\dd}z^{n,i}+M_-\delta_t^-{\dd}z^{n,i}\Bigr)
+{\dd}z^{n,i}\wedge\Bigl(K_+\delta_x^+{\dd}z^{n,i}+
K_-\delta_x^-{\dd}z^{n,i}\Bigr)=\\
&{\dd}z^{n,i}\wedge S_{zz}(z^{n,i}){\dd}z^{n,i}-
{\dd}z^{n,i}\wedge{\dd}\olol{\lambda^{n,i+1}} \GG(z^{n,i})
-
{\dd}z^{n,i}\wedge g_{zz}(z^{n,i}){\dd}z^{n,i}\olol{\lambda^{n,i+1}}.
\end{align*}
Using properties of the wedge product, the symmetry of $S_{zz}(z)$ and of the Hessian matrix 
$g_{zz}(z)$, 
and the fact that the numerical solution given by \eqref{ms-euler} satisfies the 
constraint, we end up with the discrete conservation law \eqref{discretCL}.
\end{proof}

\section{Applications to wave map equations}\label{sect-app}
In this section, we show that the wave map equation possesses a multi-symplectic formulation. 
Furthermore, we derive an Euler box scheme for the wave map equation and show that this multi-symplectic numerical method is explicit, 
and has a particular simple form which is closely related to the \Shake algorithm.

\subsection{A multi-symplectic formulation of wave map equations}\label{sect-msWME}
The following wave map equations with a smooth potential $V$~\cite{sz03,gi03,zfl05} 
\begin{gather}
\label{waveMnl}
   \begin{aligned}
   u_{tt}-u_{xx}&=-V'(u)+\lambda \GG(u)\\
   g(u)&=0,
   \end{aligned}
\end{gather}
where $u=(u_1,u_2,u_3)\in\R^3$, 
can be put into the multi-symplectic framework \eqref{prob}. 
For simplicity, we will only consider a domain in $\mathbb{R}^1$ here.  
An example on a $2$-dimensional torus will be given in \autoref{sect-numexp}.

Indeed, considering the vector of state variable $z=(u_1,u_2,u_3,v_1,v_2,v_3,m_1,m_2,m_3)$, 
taking the skew-symmetric matrices ($I$ denotes the identity matrix in $\R^3$)
$$
M=\begin{pmatrix}0 & -I & 0 \\ I & 0 & 0 \\ 0& 0& 0 \end{pmatrix}
\qquad\text{and}\qquad K=\begin{pmatrix} 0& 0& -I  \\ 0& 0& 0 \\ I& 0& 0\end{pmatrix}
$$ 
and considering the scalar function $S(z)=\frac12(v_1,v_2,v_3)^T(v_1,v_2,v_3)-
\frac12(m_1,m_2,m_3)^T(m_1,m_2,m_3)+V(u_1,u_2,u_3)$ we obtain the equivalent 
representation \eqref{prob}.
This multi-symplectic formulation of the wave map equation \eqref{waveMnl} takes the explicit form  
\begin{align*}
-v_t-m_x&=V'(u)-\lambda \GG(u)\\
u_t&=v\\
u_x&=-m\\
g(u)&=0.
\end{align*}
In particular, taking $V\equiv0$ and $g(u)=\abs{u}^2-1$ in \eqref{waveMnl}, 
one gets a multi-symplectic formulation \eqref{prob} of the classical 
wave map problem into the unit sphere~\cite{s97}
\begin{gather}
   \label{waveM}
   \begin{aligned}
   u_{tt}-u_{xx}&=\lambda u\\
   \abs{u}^2&=1.
   \end{aligned}
\end{gather}

For the wave map equation \eqref{waveMnl}, we choose the splitting of the matrices
$$
M_+=\begin{pmatrix}0 & -I &0 \\ 0&0&0 \\ 0&0&0\end{pmatrix}
\qquad\text{and}\qquad K_+=\begin{pmatrix}0 & 0 &-I \\ 0&0&0 \\ 0&0&0\end{pmatrix}.
$$
The conservation laws of multi-symplecticity, energy and momentum then read
\begin{gather}
\label{CLw}
   \begin{aligned}
   ({\dd}u\wedge{\dd}v)_t+({\dd}u\wedge{\dd}m)_x&=0\\
   (\frac12v^Tv-\frac12m^Tm+V(u)-(m_x)^Tm)_t+((u_t)^Tm)_x&=0\\
   ((u_x)^Tv)_t+(\frac12v^Tv-\frac12m^Tm+V(u)-(u_t)^Tv)_x&=0.
   \end{aligned}
\end{gather}

Integrating these two last conservation laws over the spatial domain and using appropriate 
boundary conditions, one obtains two conserved quantities. 
Wave map problems \eqref{waveMnl} are thus Hamiltonian 
PDEs with constraint and having the following conserved quantities, 
see also~\cite{t04,bfp07}, 
\begin{align}
H(u)&=\int_{\Omega}\bigl(\frac12 \abs{u_t}^2+
\frac12\abs{u_x}^2+V(u)\bigr)\,{\dd}x\label{hamwaveMnl}\\
M(u)&=\int_{\Omega}\frac12 (u_x)^Tu_t\,{\dd}x.\label{momwaveMnl}
\end{align}

\subsection{A multi-symplectic scheme for wave map equations}\label{sect-ebWME}
For the particular case of wave map problems \eqref{waveMnl}, one can eliminate all the additional variables 
in the Euler box scheme \eqref{ms-euler} and express the numerical scheme only in terms of $u$.
This gives us the following multi-symplectic integrator for wave map equations \eqref{waveMnl}
\begin{align*}
\delta_t^+\delta_t^-u^{n,i}-\delta_x^+\delta_x^-u^{n,i}&=-V'(u^{n,i})-\olol{\lambda^{n,i+1}} \GG(u^{n,i})\\
g(u^{n,i+1})&=0.
\end{align*}
Developing all the above terms, the Euler box scheme for wave map equations \eqref{waveMnl} thus reads
\begin{gather}
   \begin{aligned}
   \frac{u^{n,i+1}-2u^{n,i}+u^{n,i-1}}{\Delta t^2}-\frac{u^{n+1,i}-2u^{n,i}+u^{n-1,i}}{\Delta x^2}
   &=-V'(u^{n,i})-\olol{\lambda^{n,i+1}} \GG(u^{n,i})\\
   g(u^{n,i+1})&=0.
   \end{aligned}
\end{gather}
It is more convenient to rewrite it in the equivalent form (with a slight abuse of notation for the Lagrange multiplier $\olol{\lambda^{n,i+1}}$):
\begin{gather}
   \label{EB}
   \begin{aligned}
   \frac{\widetilde{u^{n,i+1}}-2u^{n,i}+u^{n,i-1}}{\Delta t^2}-\frac{u^{n+1,i}-2u^{n,i}+u^{n-1,i}}{\Delta x^2}
   &=-V'(u^{n,i})\\
   u^{n,i+1} &= \widetilde{u^{n,i+1}} - \olol{\lambda^{n,i+1}} \GG(u^{n,i})\\
   g(u^{n,i+1})&=0.
   \end{aligned}
\end{gather}
The last formulation emphasizes that the computation consists of two steps:
\begin{enumerate}
   \item Compute $\widetilde{u^{n,i+1}}$ using the explicit formula \eqref{EB}
   \item Project $\widetilde{u^{n,i+1}}$ onto the constraint manifold in the direction $\GG(u^{n,i})$.
\end{enumerate}
In the classical wave map case, the constraint manifold is a sphere of radius one. 
The value of the Lagrange multiplier $\olol{\lambda^{n,i+1}}$ is thus a solution of a quadratic problem. 

We assume that $u^0$ (the first step of the scheme) lies on the sphere of radius one. 
One then computes a point $\widetilde{u^1}$ by ignoring the constraint, see \autoref{fig:proj}. 
If we first define $p=\norm[\big]{\widetilde{u^1}}^2-1$ and $s = \langle u^0,\widetilde{u^1}\rangle$, 
we straightforwardly obtain
\begin{align}\label{eq:lambdaexplicit}\lambda=-s+\sqrt{s^2-p}.\end{align} 
Note that, as $s$ is generally positive, we use the following equivalent formula 
in order to avoid potential ``catastrophic cancellation'' issues: 
$\lambda = \frac{p}{-s - \sqrt{s^2 - p}}$.
With either of those formulas the projection step is the explicit operation
\begin{equation}
   u^1 = \widetilde{u^1} - \frac{p}{-s - \sqrt{s^2-p}}u^0
\end{equation}

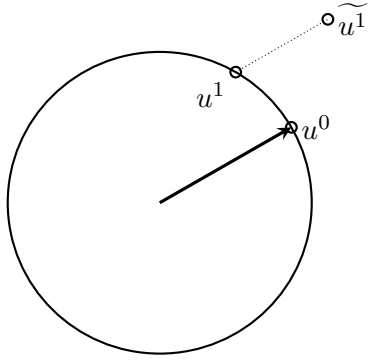
\begin{figure}
   \centering
   \begin{tikzpicture}
      \draw[thick] (0,0) circle[radius=2cm]; 
      \coordinate (q0) at (30:2cm);
      \draw[very thick, ->] (0,0) -- (q0);
      \coordinate (q1) at (60:2cm);
      \coordinate (q1t) at ($(q1) + .7*(q0)$);
      \draw[thick] (q0) circle[radius=2pt];
      \draw[thick] (q1) circle[radius=2pt];
      \draw[thick] (q1t) circle[radius=2pt];
      \draw[densely dotted] (q1) -- (q1t);
      \node[right] at (q1t) {$\widetilde{u^1}$};
      \node[below left] at (q1) {$u^1$};
      \node[right] at (q0) {$u^0$};
   \end{tikzpicture}
   \caption[]{
      The point $u^1$ is obtained by first computing a point $\widetilde{u^1}$ by ignoring the constraint.
      We then project the point $\widetilde{u^1}$ \emph{along the direction of $u^0$}, to obtain a point $u^1$ which fulfills the constraint.
      This means that we have $u^1 = \widetilde{u^1} + {\lambda} u^0$ for some scalar $\lambda$.
      In the case of a quadratic constraint, the expression for ${\lambda}$ is explicitly given by \eqref{eq:lambdaexplicit}.
       }
\label{fig:proj}
\end{figure}

\olol{The numerical integrator \eqref{EB} can also be seen as a particular instance of the 
\Shake algorithm for constrained mechanical systems, see e.g. 
\cite{MLMoVeWi14} or~\cite[Sect. VII.1.4]{HLW02}, to wave map equations. 
In other words, \eqref{EB} corresponds to an application of \Shake to a finite difference discretisation 
of the wave equation by central finite differences. }

\dada{
One can wonder what happens to the hidden constraints.
As the algorithm \eqref{EB} for wave map equations is written in $u$ only, the hidden constraints do not really make sense anymore.
Notice, however, that the constraints in $u$ are exactly preserved.
Suppose that one had used the full Euler box scheme \eqref{ms-euler} instead, with unknown variable $z$.
Then the time and space momenta, would be first order finite difference approximations of the time and space derivatives of the position $u$.
The corresponding hidden constraints would thus be approximately preserved up to first order.

Observe that for other constrained PDEs (Hamiltonian or not), it is extremely difficult to give a precise definition of hidden constraints \cite{Se10, Ve14}.
}


\section{Numerical experiments for wave map equations}
\label{sect-numexp}
This section illustrates the main properties of the Euler box scheme \eqref{EB} when applied to the wave 
map equations \eqref{waveMnl} and \eqref{waveM}. 

These numerical experiments illustrate the following properties of our method:
\begin{enumerate}
   \item We observe convergence of order two, and absence of energy drift for smooth solutions (\autoref{sec:convergence});
   \item We observe breather solutions accurately for several periods (\autoref{sec:breather});
   \item We observe the correct blow-up time as in~\cite{kw13,bfp07} (\autoref{sec:blowup});
   \item We show that we can simulate the wave map equation with potential (\autoref{sec:potential});
   \item \olol{We show the versatility of our approach by considering the Poincaré disk or 
   the complex projective space as a target manifold, see also \autoref{apCPS}.}
\end{enumerate}


\subsection{Convergence rates and approximate energy conservation for the wave map from the torus to the circle}
\label{sec:convergence}
We consider the wave map problem \eqref{waveM} in two spatial dimensions~\cite{kw13}
\begin{gather}
   \label{waveM2d}
   \begin{aligned}
   u_{tt}-u_{x_1x_1}-u_{x_2x_2}&=\lambda u\\
   \abs{u}^2-1&=0,
   \end{aligned}
\end{gather}
where $u=u(x_1,x_2,t)\in\R^2$, with $(x_1,x_2)\in\mathbb{T}^2$ the $2$-dimensional torus. 

For sake of completeness let us first state the multi-symplectic formulation 
and the scheme in the present setting. The above wave map problem has the following multi-symplectic formulation
\begin{align*}
Mz_t+K_1z_{x_1}+K_2z_{x_2}&=\nabla_zS(z)-
\lambda \GG(u)\\
g(u)&=0
\end{align*}
with the state variable $z=(u,v,p_1,p_2)$, the function 
$S(z)=\frac12v^Tv-\frac12p_1^Tp_1-\frac12p_2^Tp_2$, the constraint 
$g(u)=\abs{u}^2-1$ and the three skew-symmetric matrices 
$$
M=\begin{pmatrix}0 & -I & 0 &0\\ I & 0 & 0 &0\\ 0& 0& 0 &0\\ 0& 0& 0&0\end{pmatrix}
\qquad\text{and}\qquad K_1=\begin{pmatrix} 0& 0& -I &0 \\ 0& 0& 0&0 \\ I& 0& 0&0\\ 0& 0& 0&0\end{pmatrix} 
\qquad\text{and}\qquad K_2=\begin{pmatrix} 0& 0& 0 &-I \\ 0& 0& 0&0 \\ 0& 0& 0&0\\ I& 0& 0&0\end{pmatrix}.
$$
The corresponding multi-symplectic Euler box scheme, for the classical splitting of the matrices, reads
\begin{gather}
      \label{eq:multishake2d}
   \begin{aligned}
   \delta_t^+\delta_t^-u^{n,m,i}-\delta_{x_1}^+\delta_{x_1}^-u^{n,m,i}-\delta_{x_2}^+\delta_{x_2}^-u^{n,m,i}&=-\olol{\lambda^{n,i+1}} \GG(u^{n,m,i})\\
   g(u^{n,m,i+1})&=0.
   \end{aligned}
\end{gather}

Problem \eqref{waveM2d} has the following analytical solution: 
\begin{align}\label{eq:analytic}
u(x,t) = \paren[\Big]{\cos\paren[\big]{\theta(x,t)}, \sin\paren[\big]{\theta(x,t)}}
\end{align}
where $x=(x_1,x_2)$ and $\theta$ is a solution of the \emph{linear} wave equation
\begin{align}
   \theta_{tt} - \triangle \theta = 0.
\end{align}
Such solutions are superpositions of the functions
\begin{align}\label{eq:superposition}
   \theta_k(x,t) \coloneqq a_k \cos\paren[\big]{k_1 x_1 + k_2 x_2 - \norm{k} t - \varphi_k},
\end{align}
where $k = (k_1,k_2)\in\Z^2$ is the {wavenumber}, $a_k\in\R$ is the amplitude, 
and $\varphi_k\in\mathbb{T}$ is an arbitrary phase shift.

In the following numerical experiments, we thus compute the exact solution of our wave map problem \eqref{waveM2d} using 
formulas \eqref{eq:analytic} and \eqref{eq:superposition} and choosing the values $a_k$, $\varphi_k$ from 
\autoref{tab:ampphase}.

\begin{table}
   \centering
   \begin{tabular}{@{}lll@{}} \toprule
      Wavenumber &  Amplitude & Phase \\
      \midrule
      $(1,1)$ & $1$ & $0$ \\
      $(2,1)$ & $0.5$ & $0.5$ \\
      $(-1,1)$ & $0.2$ & $0.8$ \\
      \bottomrule
   \end{tabular}
   \caption[]{The values of the wavenumbers (pairs of integers), as well as amplitudes (scalar) 
and phase shifts (angle) used in \autoref{fig:order}.}
   \label{tab:ampphase}
\end{table}

We now use our multi-symplectic numerical method \eqref{eq:multishake2d}.
\autoref{fig:order} shows a plot of the error, i.e., the norm of the difference between 
the computed solution and the exact solution.
The norm used is that of the space $\mathrm{L}^{\infty}(0,T; \mathrm{L}^2(\mathbb{T}^2))$, where $\mathbb{T}^2$ is the spatial domain, the two-dimensional torus.
The integer $N$ denotes the number of points in space.
The final time is $T = 1$.
We choose a Courant ratio $\Delta t/\Delta x=1/2$, i.\,e., there are twice as many time points than space points.
The slope of the fitted grey line is $2.15$ which indicates convergence of order two.


\begin{figure}
   \centering
   \includegraphics[width=.8\textwidth]{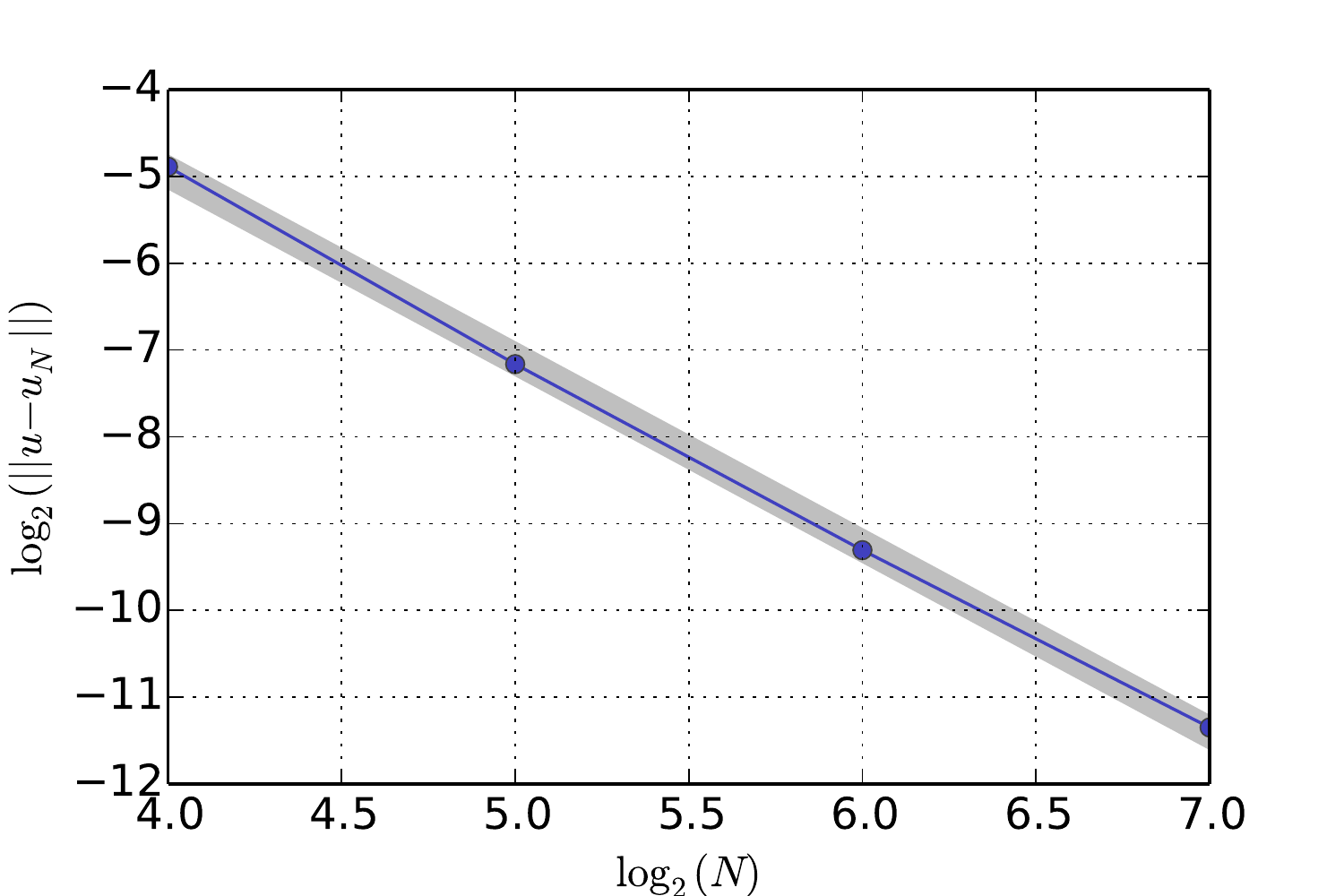}
   \caption{Wave map from the torus $\mathbb{T}^2$ to the circle: plot of the  error of the computed solution with respect to the exact one. 
    The integer $N$ denotes the number of points in space. The slope of the fitted grey line is $2.15$ which indicates convergence of order two.}
\label{fig:order}
\end{figure}

\autoref{fig:energy_wave} displays the relative energy error between the energy $E$ and the initial energy $E_0 = 66.3$, along the numerical solution given by the multi-symplectic scheme \eqref{eq:multishake2d} on the time interval $[0,11]$ with $N=2^7$ points in space. 
We observe good approximate energy conservation.

\begin{figure}
   \centering
   \includegraphics[width=.8\textwidth]{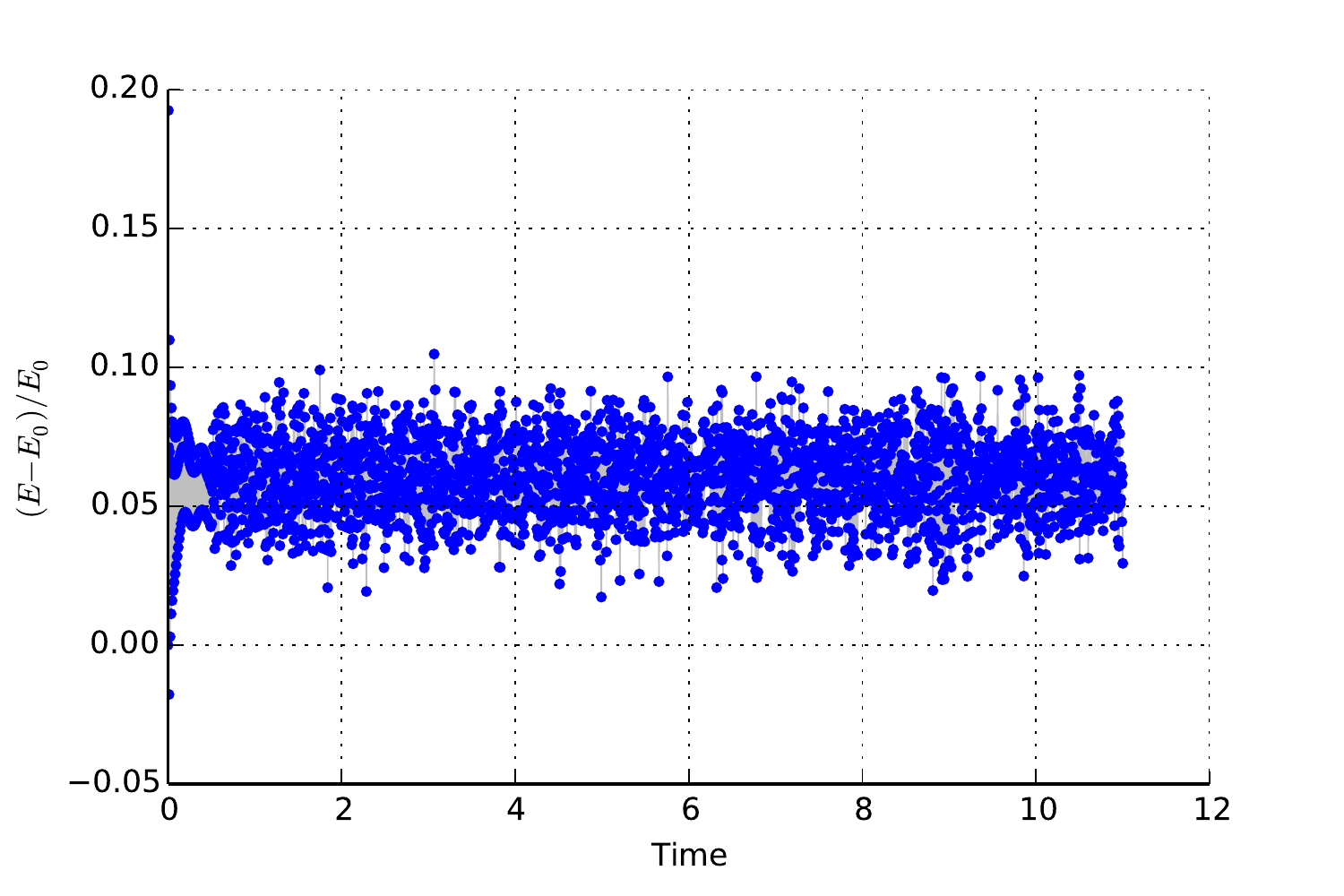}
   \caption{Wave map from the torus $\mathbb{T}^2$ to the circle: relative energy error $(E-E_0)/E_0$ along the numerical solution on the time interval $[0,11]$.
      The initial energy is $E_0 = 66.3$.
   }
   \label{fig:energy_wave}
\end{figure}

\subsection{Breather solutions}
\label{sec:breather}

\newcommand*\ellj{\sqrt{\ell^2 - j^2}}

We now consider breather solutions of the wave map equation
\begin{align*}
u_{tt}-u_{xx}&=\lambda u\\
\abs{u}^2&=1,
\end{align*}
where $u\colon\R\times \mathbb{T} \to\R^3$. We consider the following initial condition
\begin{subequations}
\label{breatherInit}
\begin{align}
u_0&=\begin{pmatrix}\cos(\ell x)\\ \sin(\ell x)\\0 \end{pmatrix} \label{eq:breather_zero}\\
\label{eq:breather_one}
u_1&= u_0 + \begin{pmatrix}0\\0\\ \varepsilon \sin(jx) \end{pmatrix}, 
\end{align}
\end{subequations}
where $\ell$ and $j$ are integers such that $1 \leq j \leq \ell-1$, and $\varepsilon$ is an arbitrary small parameter.
Observe that the period of the breather wave map tends to infinity when $\varepsilon$ goes to zero.

Let us first show that this initial condition is a first order approximation of the breather wave map 
given by~\cite[Lemma~7.2]{ss96}. Indeed, this breather wave map is given by
$$
u_b(x,s)\coloneqq\begin{pmatrix}
\cos(\kappa)\cos(\ell x-\kappa)-\sin(\kappa)\cos(s/\sin(\kappa))\sin(\ell x-\kappa)\\
\cos(\kappa)\sin(\ell x-\kappa)+\sin(\kappa)\cos(s/\sin(\kappa))\cos(\ell x-\kappa)\\
\sin(\kappa)\sin(s/\sin(\kappa))
\end{pmatrix},
$$
where
\begin{align*}
\tan(\kappa)=\frac{\ell}{j}\tan(jx)\qquad\text{and}\qquad s(x,t)\coloneqq \int_{-\infty}^t \ell\sin\paren[\big]{\alpha(\tau,x)/2} \dd \tau
\end{align*}
and 
$$
\alpha=\alpha(x,t)=4\arctan\paren[\bigg]{\frac{\sqrt{\ell^2-j^2}}{j}\frac{\sin(jx)}{\cosh(\sqrt{\ell^2-j^2}t)}}
$$
is the classical breather solution of the generalised sine-Gordon equation $\alpha_{tt}-\alpha_{xx}-\ell^2\sin(\alpha)=0$. 

In fact, the initial condition  \eqref{breatherInit} is obtained using 
a first order approximation of $u_b(x,s)$ at $s=0$.
First, as noted in~\cite{ss96}, using the identity for the sum of angles of trigonometric functions, the value of $u_b(x,0)$ simply reduces to \eqref{eq:breather_zero}.

Now, for $t \simeq -\infty$, we have $\cosh(\ellj t) \simeq \infty$, so we approximate $\alpha(x,t)$ by
\begin{align*}
   \alpha(x,t) \simeq 4 \frac{\ellj}{j} \exp(\ellj t) \sin(jx)
   .
\end{align*}
This gives in turn
\begin{align*}
   s(x,t) \simeq \int_{-\infty}^{t} \frac{\ell}{2} \alpha(x,\tau) \dd \tau
\end{align*}
so we obtain
\begin{align*}
   s(x,t) \simeq 2 \frac{\ell}{j} \exp\paren[\big]{\ellj t} \sin(jx)
   ,
\end{align*}
and we choose
\begin{align*}
   \varepsilon \coloneqq 2 \frac{\ell}{j} \exp\paren{\ellj  t},
\end{align*}
which is infinitesimally small when $t \simeq -\infty$.

Finally, a first order development of $u_b$ at $s=0$ yields
\begin{align*}
u_b(x,s) \simeq u_b(x,0) + \begin{pmatrix}
   0 \\ 0 \\ s
\end{pmatrix}
\end{align*}
which justifies the choice \eqref{eq:breather_one}.

We now run our multi-symplectic scheme \eqref{EB} on the example corresponding to the winding number $\ell=7$ 
and the initial frequency $j=5$. The value of $\varepsilon$ in \eqref{eq:breather_one} is set to $10^{-4}$. 
\autoref{fig:breathers} presents snapshots of the numerical solutions computed with $N=2^9$ points in space, and a Courant ratio $\Delta t/\Delta x = 1/2$.
We observe a periodic motion, which leads us to define a \emph{period} as the first time at which the numerical solution returns to its initial state.
      
\newcommand*\subfloatbreather[2]{%
\subfloat[#2]{\includegraphics[width=.2\textwidth]{cropped/_breather_#1}}
}

\begin{figure}
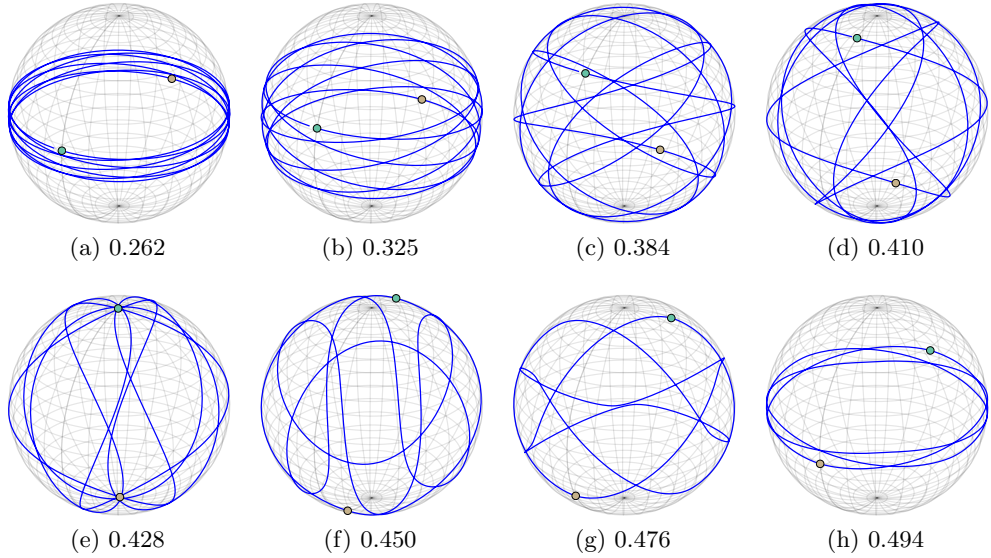

   \centering
   \subfloatbreather{0142}{0.262}\quad
   \subfloatbreather{0176}{0.325}\quad
   \subfloatbreather{0208}{0.384}\quad
   \subfloatbreather{0222}{0.410}\\
   \subfloatbreather{0232}{0.428}\quad
   \subfloatbreather{0244}{0.450}\quad
   \subfloatbreather{0258}{0.476}\quad
   \subfloatbreather{0268}{0.494}

   \caption[]{Wave map from the circle to the sphere: snapshots of the breather of winding number $\ell=7$ and initial frequency $j=5$.
             The caption below each snapshot indicates the time in period units.
             One can further see that each particle stays on a circle on the sphere, thus illustrating~\cite[Lemma~7.1]{ss96}.
          }
\label{fig:breathers}
\end{figure}

Using the same data as in the previous numerical experiments, \autoref{fig:breatherinstability} displays the relative energy error and amplitude in the $z$ direction of the numerical solution over three periods.
These plots, in period units, show, as expected, that the breathers are not stable. 
Note that it is a major merit of the proposed numerical method to be able to accurately 
compute the breather over a few periods. 

\begin{figure}
   \centering
   \begin{tikzpicture}
      \begin{axis}
         [%
         axis on top,
         enlargelimits=false, 
         ylabel={Maximum amplitude in $z$},
         xlabel={Time},
         grid=major,
         ]
         \addplot
         graphics
         [xmin=0.0,ymin=0.0,xmax=3,ymax=1]
         {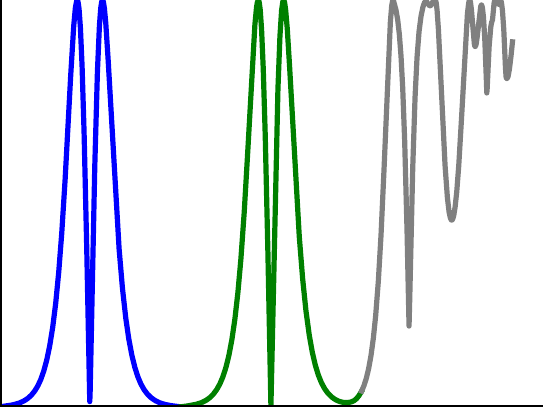};
         
      \end{axis}
   \end{tikzpicture}
   \caption{
      Wave map from the circle to the sphere: plot of the maximum amplitude in the $z$ direction over three periods.
            Each period is plotted with a different colour.
            Observe how the proposed numerical method accurately computes the breathers over a few periods, despite the unstability of the solutions.
         }
\label{fig:breatherinstability}
\end{figure}

Finally, \autoref{fig:breatherenergy} shows the relative energy error of the above breather over thirty period units. 
We use the same colours as in \autoref{fig:breatherinstability}.  
The initial energy is still $E_0 = 967$, so we see that the energy oscillations are minimal, and that there is no energy drift. 

\begin{figure}
   \centering
   \begin{tikzpicture}
      \begin{axis}
         [%
         axis on top,
         grid=major,
         enlargelimits=false, 
         xlabel={Time in period units},
         ylabel={$(E-E_0)/E_0$},
         ytick align=outside,
         ]
         \addplot
         graphics
         [xmin=0,ymin=-0.015,xmax=30,ymax=0.015]
         {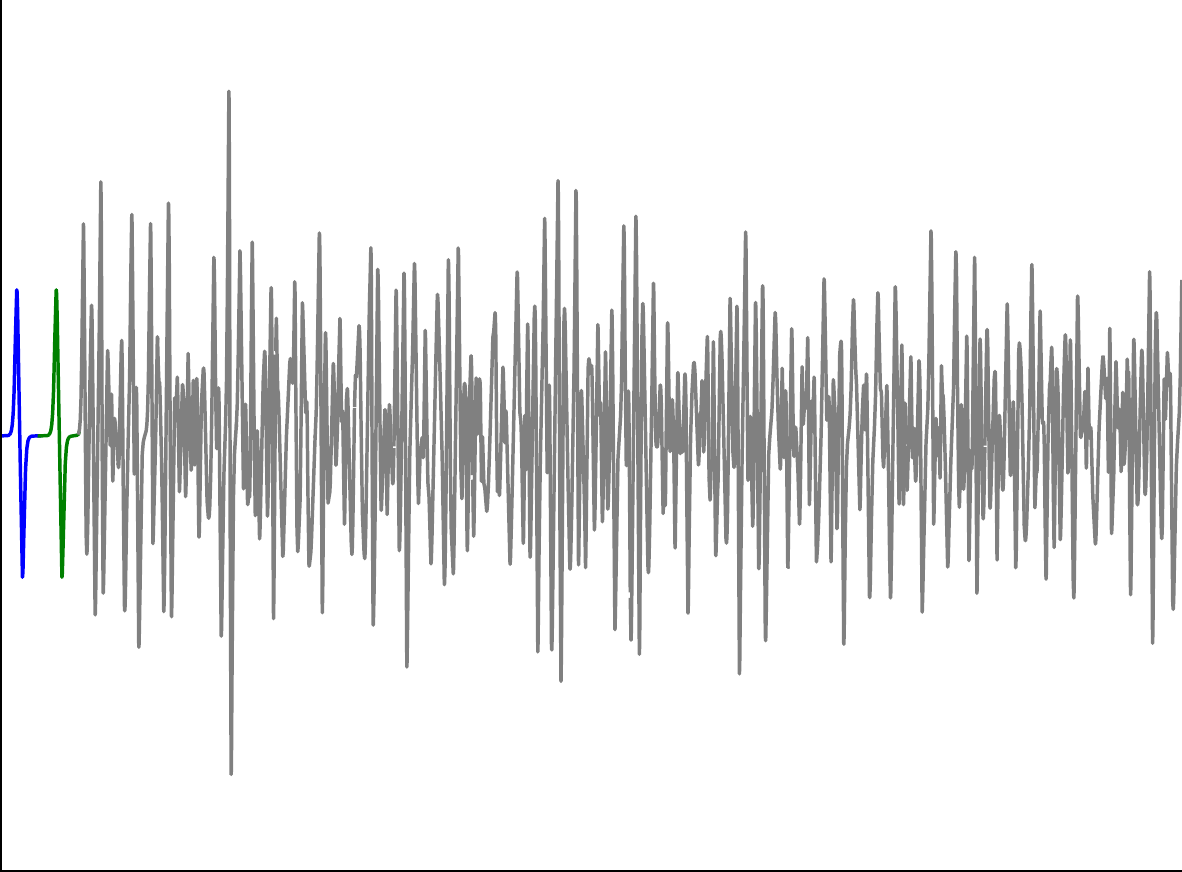};
\end{axis}
   \end{tikzpicture}
   \caption{Wave map from the circle to the sphere: plot of the relative energy error $(E-E_0)/E_0$ of the breather of \autoref{fig:breatherinstability} over thirty period units.
      We observe no energy drift.
   }
   \label{fig:breatherenergy}
\end{figure}

\subsection{Blow-up of smooth initial data}
\label{sec:blowup}
The purpose of this section is to show that our scheme obtains 
the same blow-up time as in the numerical experiments presented in~\cite{bfp07,kw13}. 
\olol{This is only a preliminary study and one should be aware that adaptive mesh refinement 
techniques,~\cite{MR1374282} for example, should be used in order to have a proper 
understanding of the behaviour of the solution close to blow-up times.} 

We consider the wave map equation 
\begin{align*}
u_{tt}-\Delta u&=\lambda u\\
\abs{u}^2&=1,
\end{align*}
where $u=u(x,t)\in\R^3$, $x=(x_1,x_2)\in[-1/2,1/2]^2$. This problem is supplemented with 
homogeneous Neumann boundary conditions. 

We consider the equivariant initial data~\cite{bfp07,kw13} 
\begin{align}\label{busid}
u_0(x_1,x_2)=
\frac{1}{a(r)^2+r^2}\paren[\Big]{2x_1a(r),2x_2a(r),a(r)^2-r^2}
\end{align}
with $r=\sqrt{x_1^2+x_2^2}$ and
\begin{align*}
a(r)=
\begin{cases}
(1-2r)^4 & r \leq 1/2\\
0 & r \geq 1/2.
\end{cases}
\end{align*}

We use our multi-symplectic scheme with $N=128$ points in each direction in space, with a Courant ratio $\Delta t/\Delta x = 1/2$. 
Our results are shown in \autoref{fig:blowup_time} and \autoref{fig:blowup_radius}. 
Looking at the two subplots of \autoref{fig:blowup_time}, we can estimate 
the blow-up time at $0.28$ by glancing at the maximum value of the computed energy. This corresponds 
to the time where the center particle brutally flips over to pointing to the opposite direction $z=-1$. 
\autoref{fig:blowup_radius} offers a view of the $z$ coordinate versus a radius.
Once again we observe the blow-up time at $0.28$ represented by a red horizontal line.

\begin{figure}
   \centering
   \subfloat[Relative energy error versus time]{
   \begin{tikzpicture}[scale=.7]
      \begin{axis}
         [%
         axis on top,
         grid=major,
         enlargelimits=false, 
         xlabel={Time},
         ylabel={$(E-E_0)/E_0$},
         ytick align=outside,
         ]
         \addplot
         graphics
         [xmin=0.0,ymin=-1,xmax=.6,ymax=5]
         {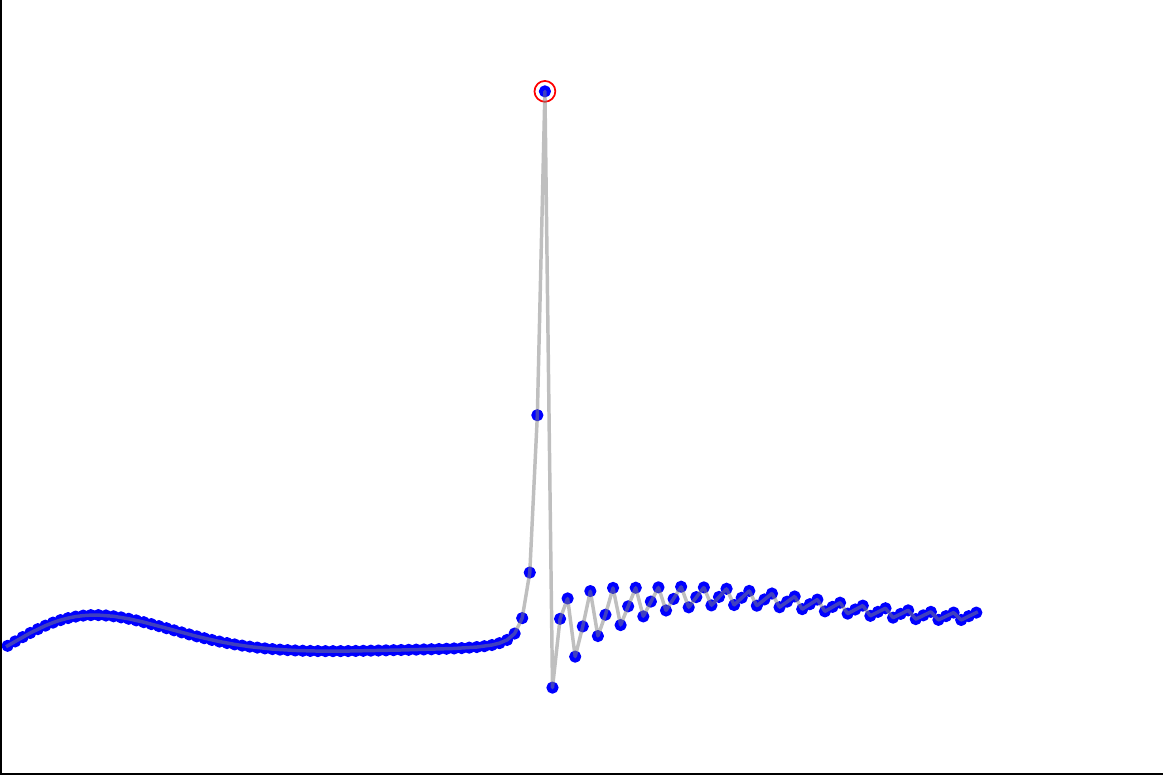};
\end{axis}
   \end{tikzpicture}
   \label{blowenergy}
   }\quad
   \subfloat[$z$ coordinate of a center particle versus time]{
   \begin{tikzpicture}[scale=.7]
      \begin{axis}
         [%
         axis on top,
         grid=major,
         enlargelimits=false, 
         xlabel={Time},
         ylabel={$z$},
         ytick align=outside,
         ]
         \addplot
         graphics
         [xmin=0.0,ymin=-1,xmax=.6,ymax=1]
         {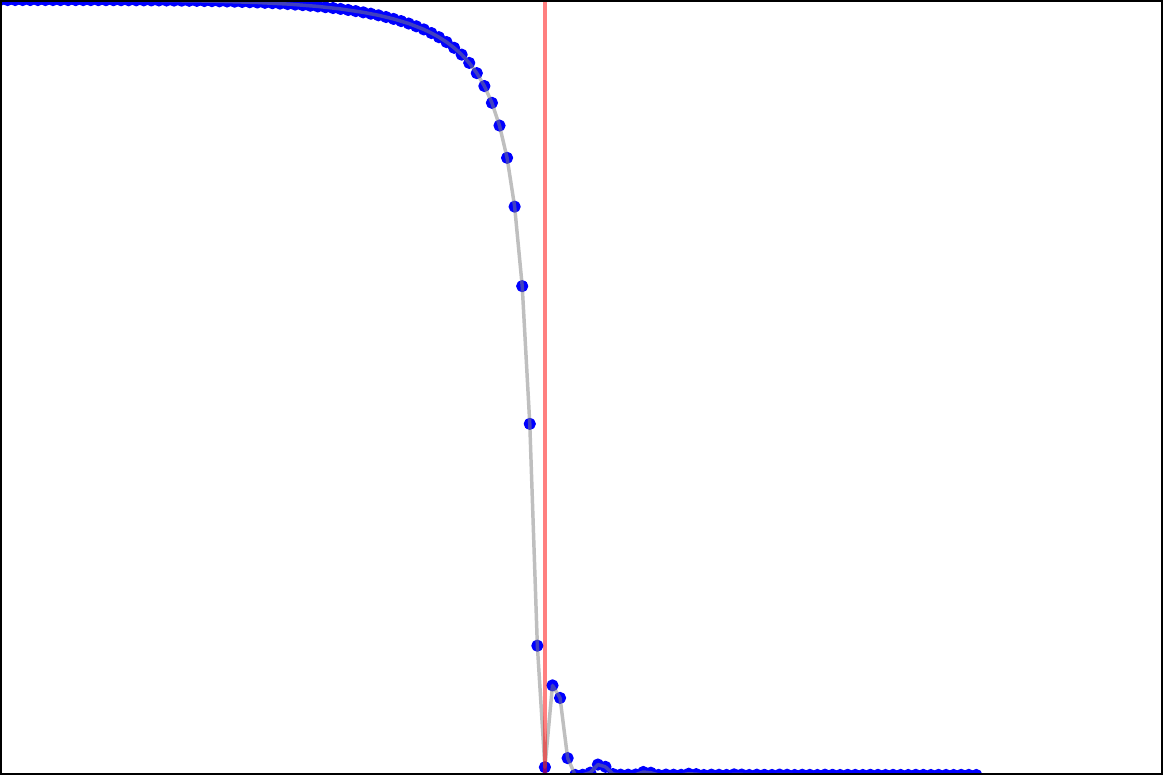};
\end{axis}
   \end{tikzpicture}
   \label{blowheight}
   }
   \caption{Blow-up of smooth initial data (\ref{busid}).}
   \label{fig:blowup_time}
\end{figure}

\begin{figure}
   \centering
      \input{coolwarmcolorbar}
   \begin{tikzpicture}
[spy using outlines={circle, magnification=2, connect spies},]
      \begin{axis}
         [%
         axis on top,
         enlargelimits=false, 
         xlabel={Radius},
         ylabel={Time},
         ytick align=outside,
         ytick={0,0.1,.2,0.28,.4,.5},
         colorbar=true,
         point meta min=-1,
         point meta max=1,
         ]
         \addplot
         graphics
         [xmin=0.0,ymin=0.0,xmax=.5,ymax=.5]
         {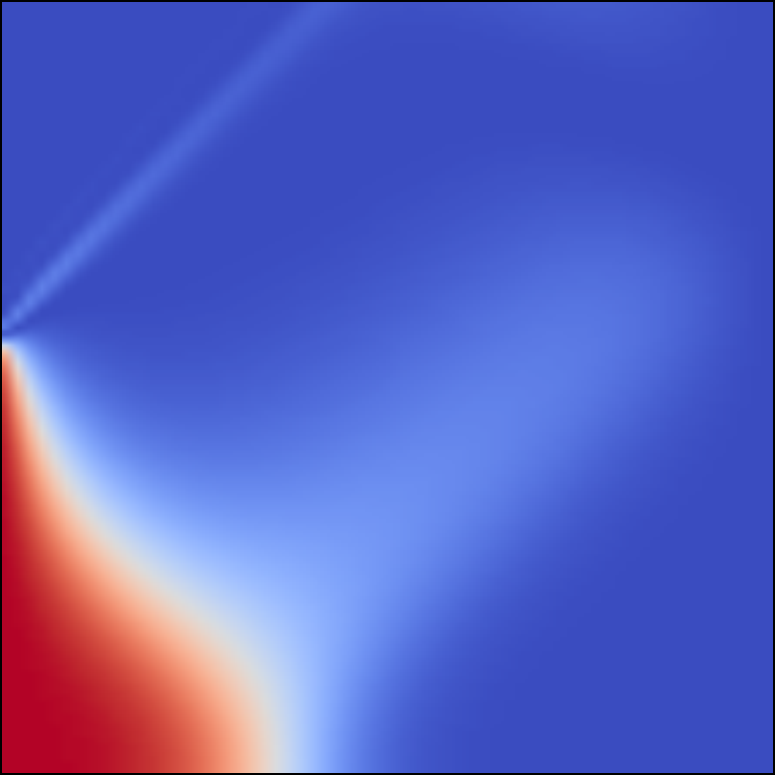};
\coordinate (spypoint) at (axis cs:.015,0.285);
\coordinate (magnifyingglass) at (axis cs:0.25,0.4);
\end{axis}
\spy[white, size=1.9cm] on (spypoint) in node[fill=white] at (magnifyingglass);
   \end{tikzpicture}
   \caption{Blow-up: view of the $z$ coordinate over time, versus a radius.
   The blow-up time, represented by the horizontal line, is $0.28$, confirming the experimental measurements from the literature.
}
   \label{fig:blowup_radius}
\end{figure}

The blow-up time measured in Figure\autoref{blowenergy} at $0.28$, as well as the flip observed in Figure\autoref{blowheight} are identical to the ones observed in~\cite{bfp07,kw13}.

\subsection{Wave map equations with smooth potential}
\label{sec:potential}
Finally, we consider the discretisation of a wave map equation with an additional smooth potential
\begin{align}
   \label{eq:axispotential}
   V(u) = 400 \paren{u_1^2 + u_2^2}
   .
\end{align}
The setting is otherwise the same as in \autoref{sec:breather}.
We plot some snapshots of the numerical solution given by our multi-symplectic 
integrator in \autoref{fig:gordon}. The initial condition is a single winding 
around a great circle of the sphere, tilted from the equator plane at an angle of $45$ degrees. 
It is thus a fixed point of the wave map without potential. That initial condition is not a fixed point 
of the wave map with potential, as is evidenced by the snapshots in \autoref{fig:gordon}.

\newcommand*\subfloatgordon[2]{%
\subfloat[#2]{\includegraphics[width=.2\textwidth]{gordon/_gordon_#1}}
}

\begin{figure}
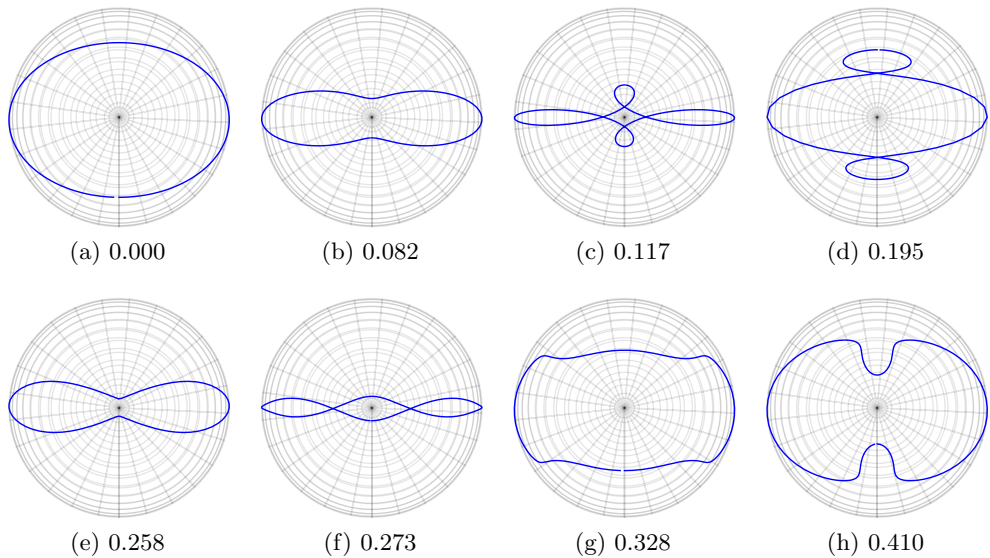

   \centering
\subfloatgordon{0000}{0.000}\quad
\subfloatgordon{0021}{0.082}\quad
\subfloatgordon{0030}{0.117}\quad
\subfloatgordon{0050}{0.195}\\
\subfloatgordon{0066}{0.258}\quad
\subfloatgordon{0070}{0.273}\quad
\subfloatgordon{0084}{0.328}\quad
\subfloatgordon{0105}{0.410}\\
   \caption[]{
      Wave map with potential: snapshots of the wave map equation with the potential (\ref{eq:axispotential}).
      The caption below each snapshot is the time.
          }
   \label{fig:gordon}
\end{figure}

\subsection{\olol{Wave map equations on a hyperbolic space}}

We use the hyperboloid model of the two-dimensional hyperbolic space.
The ambient space is $\R^3$ with the bilinear form of signature $(-,-,+)$, i.e.,
\begin{equation}
   \norm{(x,y,z)} \coloneqq -x^2 - y^2 + z^2
   .
\end{equation}

We represent the solutions of the wave map with the \emph{Poincaré disk} as a target.
Recall that the Poincaré disk is a stereographic projection of the hyperboloid on a disk or radius one.
A point of coordinates $(x,y,z)$ is projected to the point $(x/(1+z), y/1+z)$.

Our method works exactly in the same way, and the equation $\norm{u}$ now represents the ``sphere'' associated with that bilinear product, that is a two-sheet hyperboloid.
We always stay on the hyperboloid sheet with positive $z$-coordinate.

In \autoref{fig:hyperbolic}, we show the evolution of a wave map on the Poincaré disk, with initial condition, on the Poincaré disk identified as a subset of $\CC$, given by 
\begin{equation}
   z_0(\theta) = \e^{\ii \theta} + 0.3 \e^{\ii 8\theta} + 0.2 \e^{\ii 4 \theta}
   \qquad \theta \in [0,2\pi]
   .
\end{equation}

Note that the simulation does not take place directly on the Poincaré disk, but on the upper hyperbolic sheet, as it is a Riemannian submanifold of $\R^3$.
We also show an energy plot on \autoref{fig:hyperbolic_energy} which confirms that there is no energy drift along the numerical solution.

\newcommand*\subfloathyperbolic[2]{%
\subfloat[#2]{\includegraphics[width=.2\textwidth]{hyperbolic/hyperbolic_#1-crop}}
}

\begin{figure}
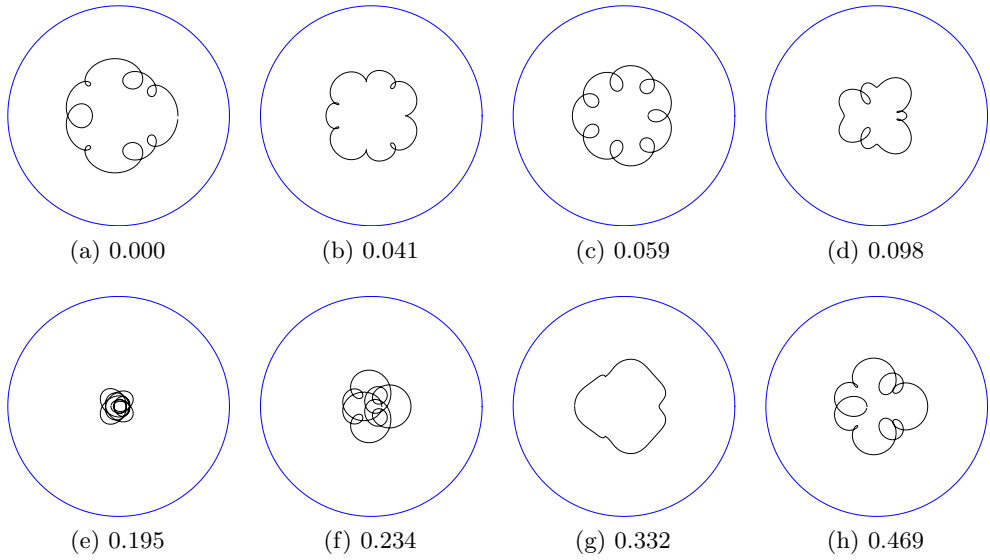

   \centering

\subfloathyperbolic{0000}{0.000}\quad
\subfloathyperbolic{0021}{0.041}\quad
\subfloathyperbolic{0030}{0.059}\quad
\subfloathyperbolic{0050}{0.098}\\
\subfloathyperbolic{0100}{0.195}\quad
\subfloathyperbolic{0120}{0.234}\quad
\subfloathyperbolic{0170}{0.332}\quad
\subfloathyperbolic{0240}{0.469}\\

\caption[]{
      Wave map on a Poincaré disk.
      The caption below each snapshot is the time.
      In particular, the first plot represents the initial condition.
      There are $N = 2^8$ points, so the space step is $\Delta x = 1/N$, and we choose the time step $\Delta t = 0.5 \Delta x$.
      Note that the numerical solution is surprisingly stable, developing no chaotic behaviour.
      As this seems independent of the chosen initial condition, this could indicate some general integrability property of the wave map with a hyperbolic plane target.
          }
\label{fig:hyperbolic}
\end{figure}

\begin{figure}
\centering
\begin{tikzpicture}
      \begin{axis}
         [%
         axis on top,
         grid=both,
         enlargelimits=false, 
         ylabel={$(E-E_0)/E_0$},
         xlabel={Time},
         ytick align=outside,
         xtick align=outside,
         ]
         \addplot
         graphics
         [xmin=0,ymin=-0.04,xmax=12,ymax=0.05]
         {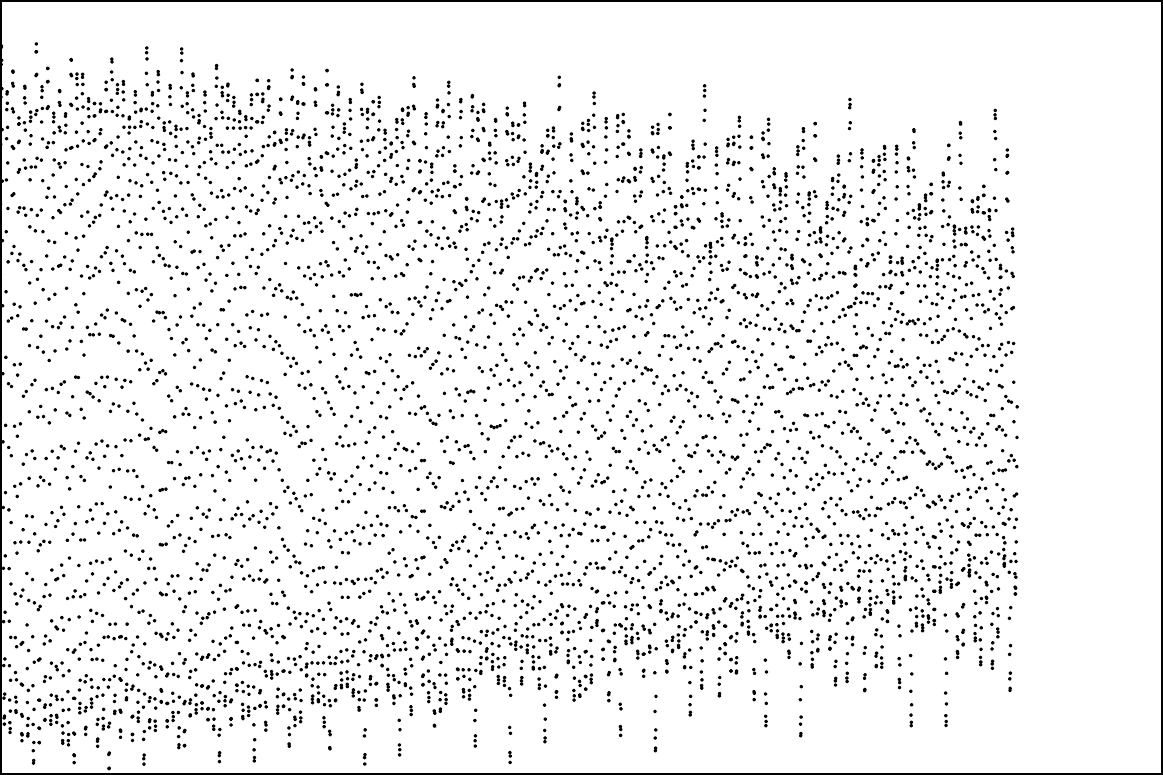};
\end{axis}
\end{tikzpicture}

\caption{Relative energy plot of the simulation of \autoref{fig:hyperbolic}.
The initial energy is $E_0 = -123$.
}
\label{fig:hyperbolic_energy}
\end{figure}

\section{Conclusion and open problems}\label{sect-conc}
In this paper, we have proposed and studied a new multi-symplectic 
numerical integrators for wave map equations on the sphere. 
This numerical scheme is explicit, conserves the constraint, 
has good conservation properties and can be seen as a generalisation 
of the \Shake algorithm for constrained mechanical systems. 
Furthermore, we observe convergence of order $2$ for smooth solutions. 

\olol{Our method allows to treat wave map equations with other target manifolds which are submanifolds of $\R^n$, see also \autoref{apCPS}.}
Such examples could include classical Lie groups and symmetric spaces, see for instance~\cite{TeUh04}. 
But these are nontrivial extensions that may be the subject of future investigations.
Furthermore, it would also be interesting to understand whether different splitting of the multi-symplectic matrices 
could have some effect on the numerical discretisation.
In addition, it remains to develop, to try out and further analyse other classical multi-symplectic schemes 
such as the Preissman box scheme or some multi-symplectic Runge--Kutta collocation methods.
\olol{Such generalisations for constrained Hamiltonian PDEs seem far from straightforward.}

For all these reasons, it seems to us that it would be of interest to get 
more insight into the behaviour of multi-symplectic schemes for Hamiltonian PDEs 
with constraints as derived in this publication. 

\appendix
\section{\olol{Simulations on a complex projective space}}
\label{apCPS}

We explain how to use our method to simulate wave maps with target given by the complex projective space.
One possible application is to simulate the breathers described in~\cite[Example~8.2]{TeUh04}.

Our method needs the target manifold to be embedded as a submanifold of a Euclidean or Minkowski space.
In this case, we use the embedding of $\CP^n$ in $\uu{n+1}$, the space of complex-symmetric (Hermitian) square matrices of size $n+1$.
That space is equipped with the Frobenius scalar product $(\rho_1,\rho_2)\coloneqq \Tr(\rho_1 \rho_2)$. 

The complex projective space is defined as the submanifold
\begin{equation}
\label{eq:defCPn}
   \CP^n \coloneqq \setc{\rho \in \uu{n+1}}{\rho^2 = \rho \quad \Tr(\rho) = 1}
   .
\end{equation}
The standard definition of $\CP^n$ is by quotienting a vector $\Psi \in \CC^{n+1}$ by the equivalence relation $\Psi_1 \simeq \Psi_2 \coloneqq \bracket{\exists \lambda \in \CC \quad \Psi_2 = \lambda\Psi_1}$.
The map sending the standard representation of $\CP^n$ to the one described above in \eqref{eq:defCPn} is simply $\Psi \to  \Psi \Psi^*$, where we identify $\Psi$ with a $(n+1)\times 1$ complex matrix (``column vector''),
and where $\Psi^*$ denotes the conjugate transpose of the matrix $\Psi$.

The only issue is that of the projection on $\CP^n$.
Note that it is only a practical issue, as this setting already fits our framework exactly.

The constraint function is now defined on $S(n+1)$, is given by
\begin{equation}
   g(\rho) = (\rho^2 - \rho, \Tr(\rho)- 1)
   ,
\end{equation}
and takes values in $\uu{n+1}\times \CC$.

We observe that the second constraint is linear, so it will be automatically fulfilled by our method.
In practice, it can be ignored entirely.

At the projection step, we want to project an element $\widetilde{\rho} \in \uu{n+1}$ onto $\CP^n$ 
along the direction $\GG(\rho_0)$, and we may assume that $\Tr\paren{\widetilde{\rho}} = \ii$, i.e., the second constraint is already fulfilled.
The result of the projection is $\rho\in \CP^n$.
By differentiating $g$, the relation between $\rho$, $\widetilde{\rho}$ and $\rho_0$ is thus
\begin{equation}
\label{eq:projected}
   \rho = \widetilde{\rho} + \rho_0 \Lambda + \Lambda \rho_0 
   ,
\end{equation}
where $\Lambda \in \uu{n+1}$ is an unknown matrix.
We thus see that we will have $(n+1)^2/2$ real unknowns.

We now impose the constraint, and this gives the quadratic equation in the matrix $\Lambda$:
\begin{equation}
\label{eq:proj_lambda}
   \widetilde{\rho}^2 - \widetilde{\rho} + \widetilde{\rho}\Lambda + \Lambda \widetilde{\rho} + \Lambda^2 - \Lambda = 0
   .
\end{equation}
Solving $\Lambda$ in \eqref{eq:proj_lambda}, and using it in \eqref{eq:projected} now gives the projected value $\rho\in\CP^n$.

\section*{Acknowledgements}
DC acknowledges support from UMIT Research Lab at Umeå University.
OV acknowledges support from the J.C.~Kempe memorial fund (grant no.~SMK-1238).

\bibliographystyle{plain}
\bibliography{bibmsc}

\def\soft#1{\leavevmode\setbox0=\hbox{h}\dimen7=\ht0\advance \dimen7
  by-1ex\relax\if t#1\relax\rlap{\raise.6\dimen7
  \hbox{\kern.3ex\char'47}}#1\relax\else\if T#1\relax
  \rlap{\raise.5\dimen7\hbox{\kern1.3ex\char'47}}#1\relax \else\if
  d#1\relax\rlap{\raise.5\dimen7\hbox{\kern.9ex \char'47}}#1\relax\else\if
  D#1\relax\rlap{\raise.5\dimen7 \hbox{\kern1.4ex\char'47}}#1\relax\else\if
  l#1\relax \rlap{\raise.5\dimen7\hbox{\kern.4ex\char'47}}#1\relax \else\if
  L#1\relax\rlap{\raise.5\dimen7\hbox{\kern.7ex
  \char'47}}#1\relax\else\message{accent \string\soft \space #1 not
  defined!}#1\relax\fi\fi\fi\fi\fi\fi}
\begin{thebibliography}{10}

\bibitem{bps10}
{\soft{L}}.~Ba{\v{n}}as, A.~Prohl, and R.~Sch{\"a}tzle.
\newblock Finite element approximations of harmonic map heat flows and wave
  maps into spheres of nonconstant radii.
\newblock {\em Numer. Math.}, 115(3):395--432, 2010.

\bibitem{b09}
S.~Bartels.
\newblock Semi-implicit approximation of wave maps into smooth or convex
  surfaces.
\newblock {\em SIAM J. Numer. Anal.}, 47(5):3486--3506, 2009.

\bibitem{bfp07}
S.~Bartels, X.~Feng, and A.~Prohl.
\newblock Finite element approximations of wave maps into spheres.
\newblock {\em SIAM J. Numer. Anal.}, 46(1):61--87, 2007/08.

\bibitem{blp09}
S.~Bartels, Ch. Lubich, and A.~Prohl.
\newblock Convergent discretization of heat and wave map flows to spheres using
  approximate discrete {L}agrange multipliers.
\newblock {\em Math. Comp.}, 78(267):1269--1292, 2009.

\bibitem{bcm95}
B.~K. Berger, P.~T. Chru{\'s}ciel, and V.~Moncrief.
\newblock On ``asymptotically flat'' space-times with {$G_2$}-invariant
  {C}auchy surfaces.
\newblock {\em Ann. Physics}, 237(2):322--354, 1995.

\bibitem{bct01}
P.~Bizo{\'n}, T.~Chmaj, and Z.~Tabor.
\newblock Formation of singularities for equivariant $(2+1)$-dimensional wave
  maps into the 2-sphere.
\newblock {\em Nonlinearity}, 14(5):1041--1053, 2001.

\bibitem{brid97}
J.~Bridges, T.
\newblock Multi-symplectic structures and wave propagation.
\newblock {\em Math. Proc. Cambridge Philos. Soc.}, 121(1):147--190, 1997.

\bibitem{br01}
T.~J. Bridges and S.~Reich.
\newblock Multi-symplectic integrators: numerical schemes for {H}amiltonian
  {PDE}s that conserve symplecticity.
\newblock {\em Phys. Lett. A}, 284(4-5):184--193, 2001.

\bibitem{MR1374282}
C.~J. Budd, W.~Huang, and R.~D. Russell.
\newblock Moving mesh methods for problems with blow-up.
\newblock {\em SIAM J. Sci. Comput.}, 17(2):305--327, 1996.

\bibitem{fp13}
J.~Frauendiener and R.~Peter.
\newblock Blow-up of the nonequivariant {$(2+1)$}-dimensional wave map.
\newblock {\em ANZIAM J.}, 55(2):151--161, 2013.

\bibitem{gi03}
V.~Georgiev and A.~Ivanov.
\newblock Concentration of local energy for two-dimensional wave maps.
\newblock {\em Rend. Istit. Mat. Univ. Trieste}, 35(1-2):195--235 (2004), 2003.

\bibitem{g91}
M.~J. Gotay.
\newblock A multisymplectic framework for classical field theory and the
  calculus of variations. {I}. {C}ovariant {H}amiltonian formalism.
\newblock In {\em Mechanics, analysis and geometry: 200 years after
  {L}agrange}, North-Holland Delta Ser., pages 203--235. North-Holland,
  Amsterdam, 1991.

\bibitem{HLW02}
E.~Hairer, C.~Lubich, and G.~Wanner.
\newblock {\em Geometric Numerical Integration}, volume~31.
\newblock Springer-Verlag, second edition, 2006.
\newblock Structure-preserving algorithms for ordinary differential equations.

\bibitem{il02}
J.~Isenberg and S.~L. Liebling.
\newblock Singularity formation in $2+1$ wave maps.
\newblock {\em J. Math. Phys.}, 43(1):678--683, 2002.

\bibitem{kw13}
T.~K. Karper and F.~Weber.
\newblock A new angular momentum method for computing wave maps into spheres.
\newblock {\em SIAM J. Numer. Anal.}, 52(4):2073--2091, 2014.

\bibitem{kt98}
M.~Keel and T.~Tao.
\newblock Local and global well-posedness of wave maps on {$\mathbf{R}^{1+1}$}
  for rough data.
\newblock {\em Internat. Math. Res. Notices}, (21):1117--1156, 1998.

\bibitem{k08}
J.~Krieger.
\newblock Global regularity and singularity development for wave maps.
\newblock In {\em Surveys in differential geometry. {V}ol. {XII}. {G}eometric
  flows}, volume~12 of {\em Surv. Differ. Geom.}, pages 167--201. Int. Press,
  Somerville, MA, 2008.

\bibitem{kst08}
J.~Krieger, W.~Schlag, and D.~Tataru.
\newblock Renormalization and blow up for charge one equivariant critical wave
  maps.
\newblock {\em Invent. Math.}, 171(3):543--615, 2008.

\bibitem{lr04}
B.~Leimkuhler and S.~Reich.
\newblock {\em Simulating {H}amiltonian dynamics}, volume~14 of {\em Cambridge
  Monographs on Applied and Computational Mathematics}.
\newblock Cambridge University Press, Cambridge, 2004.

\bibitem{mps98}
J.~E. Marsden, G.~W. Patrick, and S.~Shkoller.
\newblock Multisymplectic geometry, variational integrators, and nonlinear
  {PDE}s.
\newblock {\em Comm. Math. Phys.}, 199(2):351--395, 1998.

\bibitem{MLMoVeWi14}
R.~I. McLachlan, K.~Modin, O.~Verdier, and M.~Wilkins.
\newblock {Geometric Generalisations of Shake and Rattle}.
\newblock {\em Found. Comput. Math.}, 14(2):339--370, 2014.

\bibitem{mr03}
B.~Moore and S.~Reich.
\newblock Backward error analysis for multi-symplectic integration methods.
\newblock {\em Numer. Math.}, 95(4):625--652, 2003.

\bibitem{pf12}
R.~Peter and J.~Frauendiener.
\newblock Free versus constrained evolution of the {$2+1$} equivariant wave
  map.
\newblock {\em J. Phys. A}, 45(5):055201, 19, 2012.

\bibitem{p76}
K.~Pohlmeyer.
\newblock Integrable {H}amiltonian systems and interactions through quadratic
  constraints.
\newblock {\em Comm. Math. Phys.}, 46(3):207--221, 1976.

\bibitem{r04}
H.~Ringstr{\"o}m.
\newblock On a wave map equation arising in general relativity.
\newblock {\em Comm. Pure Appl. Math.}, 57(5):657--703, 2004.

\bibitem{Se10}
W.~{Seiler}.
\newblock {\em {Involution. The formal theory of differential equations and its
  applications in computer algebra.}}
\newblock Springer, 2010.

\bibitem{ss96}
J.~Shatah and W.~Strauss.
\newblock Breathers as homoclinic geometric wave maps.
\newblock {\em Phys. D}, 99(2-3):113--133, 1996.

\bibitem{ss98}
J.~Shatah and M.~Struwe.
\newblock {\em Geometric wave equations}, volume~2 of {\em Courant Lecture
  Notes in Mathematics}.
\newblock New York University, Courant Institute of Mathematical Sciences, New
  York; American Mathematical Society, Providence, RI, 1998.

\bibitem{ss02}
J.~Shatah and M.~Struwe.
\newblock The {C}auchy problem for wave maps.
\newblock {\em Int. Math. Res. Not.}, (11):555--571, 2002.

\bibitem{sz03}
J.~Shatah and C.~Zeng.
\newblock Constrained wave equations and wave maps.
\newblock {\em Comm. Math. Phys.}, 239(3):383--404, 2003.

\bibitem{s97}
M.~Struwe.
\newblock Wave maps.
\newblock In {\em Nonlinear partial differential equations in geometry and
  physics (Knoxville, TN, 1995)}, volume~29, pages 113--153. Birkhäuser,
  Basel, 1997.

\bibitem{t00}
T.~Tao.
\newblock Ill-posedness for one-dimensional wave maps at the critical
  regularity.
\newblock {\em Amer. J. Math.}, 122(3):451--463, 2000.

\bibitem{t01}
T.~Tao.
\newblock Global regularity of wave maps. ii. small energy in two dimensions.
\newblock {\em Comm. Math. Phys.}, 224(2):443--544, 2001.

\bibitem{t04}
D.~Tataru.
\newblock The wave maps equation.
\newblock {\em Bull. Amer. Math. Soc. (N.S.)}, 41(2):185--204 (electronic),
  2004.

\bibitem{t05}
D.~Tataru.
\newblock Rough solutions for the wave maps equation.
\newblock {\em Amer. J. Math.}, 127(2):293--377, 2005.

\bibitem{TeUh04}
C.-L. Terng and K.~Uhlenbeck.
\newblock $1+1$ wave maps into symmetric spaces.
\newblock {\em Comm. Anal. Geom.}, 12(1-2):345--388, 2004.

\bibitem{Ve14}
O.~Verdier.
\newblock {Reductions of Operator Pencils}.
\newblock {\em Math. Comp.}, 83:189--214, 2014.

\bibitem{zfl05}
J.~Zhai, J.~Fang, and L.~Li.
\newblock Wave map with potential and hypersurface flow.
\newblock {\em Discrete Contin. Dyn. Syst.}, (suppl.):940--946, 2005.

\end{thebibliography}

\end{document}